\documentclass[11pt,reqno]{amsart}
\usepackage[a4paper,margin=30mm]{geometry}
\usepackage{amsmath,amssymb,amsthm,mathtools}
\usepackage{color}
\usepackage{booktabs,microtype}
\usepackage[colorlinks=true,linkcolor=blue,citecolor=blue,urlcolor=blue]{hyperref}

\newtheorem{theorem}{Theorem}[section]
\newtheorem{conjecture}[theorem]{Conjecture}
\newtheorem{proposition}[theorem]{Proposition}

\newtheorem{remark}[theorem]{Remark}

\numberwithin{equation}{section}

\newcommand{\dd}{\,\mathrm d}

\begin{document}
	
\title[Counterexamples to Escobar's conjecture]{Counterexamples to Escobar's conjecture}

\author{Jin Sun}
\address{Jin Sun: School of Mathematical Sciences, Fudan University, 200433, Shanghai, China}
\email{\href{mailto:jsun22@m.fudan.edu.cn}{jsun22@m.fudan.edu.cn}}
	
\author{Lili Wang}
\address{Lili Wang: School of Mathematics and Statistics, Key Laboratory of Analytical Mathematics and Applications (Ministry of Education), Fujian Key Laboratory of Analytical Mathematics and Applications (FJKLAMA), Fujian Normal University, 350117, Fuzhou, China}
\email{\href{mailto:liliwang@fjnu.edu.cn}{liliwang@fjnu.edu.cn}}
	
\author{Tao Wang}
\address{Tao Wang: Beijing International Center for Mathematical Research, Peking University, 100871, Beijing, China}
\email{\href{mailto:taowang25@pku.edu.cn}{taowang25@pku.edu.cn}}
	
\subjclass[2020]{Primary 53C21; Secondary 35P15, 58C40}
\keywords{Steklov eigenvalue, Escobar conjecture, conformal deformation, Ricci curvature, convex boundary}
	
\begin{abstract}
Escobar (J Funct Anal 165(1):101-116, 1999) conjectured that for every $n\ge 3$, an $n$-dimensional compact Riemannian manifold with nonnegative Ricci curvature and all boundary principal curvatures bounded below by $\kappa>0$ must satisfy $\sigma_1\geq \kappa$. We disprove this conjecture for every $n\geq 3$ by constructing conformal deformations of the Euclidean unit ball. We first establish a perturbative criterion, then construct explicit polynomial conformal factors satisfying this criterion. For every sufficiently small $t>0$, the resulting metrics $g_t=e^{2t\Phi}g_{\mathbb{R}^n}$ have positive Ricci curvature, every boundary principal curvature is strictly larger than $1$, and $\sigma_1(\mathbb{B}^n,g_t)<1$.

The proof requires several computations, some of which were carried out in Mathematica. The Mathematica code is attached to this submission.
\end{abstract}
	
\maketitle
	
\section{Introduction}\label{sec:introduction}
	
Let $(M^n,g)$ be an $n$-dimensional smooth compact Riemannian manifold with boundary $\Sigma=\partial M$, and let $\nu$ be the outward unit normal. The Steklov problem, introduced by Stekloff \cite{Stekloff1902}, is
\[
 \Delta_g u=0\quad\text{ in }M,
 \qquad
 \partial_\nu u=\sigma u\quad\text{ on }\Sigma.
\]
Its spectrum is discrete and can be written as $0=\sigma_0<\sigma_1\leq\sigma_2\leq\cdots\nearrow\infty$. The first nonzero eigenvalue has the variational characterization
\begin{equation}\label{eq:rayleigh}
 \sigma_1(M,g)=
 \inf_{\substack{u\in H^1(M)\setminus\{0\}\\ \int_\Sigma u\dd S_g=0}}\frac{\displaystyle\int_M|\nabla_g u|_g^2\dd V_g}{\displaystyle\int_\Sigma u^2\dd S_g}.
\end{equation}
The Steklov spectrum is the spectrum of the Dirichlet-to-Neumann operator and therefore reflects both interior and boundary geometry. For a survey of Steklov eigenvalues and related results, we refer to \cite{CGGS2024,GirouardPolterovich2017}. 
	
Sharp upper bounds for $\sigma_1$ have a long history. Weinstock \cite{Weinstock1954} proved that the disk maximizes $\sigma_1$ among simply connected planar domains with fixed perimeter. In higher dimensions, Brock \cite{Brock2001} proved that the ball maximizes $\sigma_1$ among Euclidean domains of fixed volume. For convex Euclidean domains, Bucur--Ferone--Nitsch--Trombetti \cite{BFNT2021} obtained the sharper version in which the boundary area constraint replaces the volume constraint. Fraser and Schoen \cite{FraserSchoen2011, FraserSchoen2019} developed conformal and minimal surface methods in Steklov eigenvalue optimization.

Lower bounds, by contrast, typically require curvature assumptions. Payne \cite{Payne1970} showed that if $\Omega\subset\mathbb{R}^2$ is a smooth bounded convex domain whose boundary curvature is at least $\kappa>0$, then $\sigma_1(\Omega)\geq \kappa$, with equality only for the disk of radius $1/\kappa$. Escobar \cite{Escobar1997} extended this
estimate to compact Riemannian surfaces with nonnegative Gaussian curvature and boundary geodesic curvature bounded below by $\kappa$. The maximum principle arguments underlying these results are specific to dimension two. Throughout, for a compact Riemannian manifold with boundary, we denote by $\mathrm{II}_g$ the second fundamental form of the
boundary, defined by
\[
 \mathrm{II}_g(X,Y)=g(\nabla_X\nu,Y),\qquad X,Y\in T\Sigma,
\]
where $\nu$ is the outward unit normal. In higher dimensions, Escobar \cite{Escobar1997} used Reilly's formula to prove that if $n\ge3$, $\mathrm{Ric}_g\ge0$, and $\mathrm{II}_g\ge\kappa\,g|_{\Sigma}$ for some $\kappa>0$, then $\sigma_1>\kappa/2$. He subsequently proposed the following conjecture.

\begin{conjecture}[\cite{Escobar1999}]\label{conj:Escobar}
Let $(M^n,g)$, $n\geq3$, be a smooth compact connected Riemannian manifold with nonempty boundary $\Sigma$. Suppose that
\[
 \mathrm{Ric}_g\geq0\quad\text{on } M,\qquad \mathrm{II}_g\geq\kappa g|_\Sigma\quad\text{on } \Sigma
\]
for some constant $\kappa>0$. Then
\[
\sigma_1(M,g)\geq\kappa.
\]
Moreover, equality holds if and only if $(M,g)$ is isometric to the Euclidean ball of radius $1/\kappa$.
\end{conjecture}

Xia and Xiong \cite{XiaXiong2024} proved Conjecture~\ref{conj:Escobar} under the stronger assumption of nonnegative sectional curvature. Under the original Ricci and boundary principal curvatures hypotheses, Duncan and Kumar \cite{DuncanKumar2025} improved lower bounds, Liu and Yang~\cite{LiuYang2026-cam} proved the lower bound $\sigma_1 > \kappa - K/\kappa $ with the additional assumptions $\mathrm{Sec}_g\ge -K$. Later, Liu and Yang \cite{LiuYang2026} introduced a weight function depending on the radial sectional curvature lower bound. This weight yields a unified lower bound for $\sigma_1$ that interpolates between Escobar's estimate $\sigma_1>\kappa/2$ and the sharp bound $\sigma_1\ge\kappa$ under nonnegative sectional curvature. 

However, these results leave open whether
Conjecture~\ref{conj:Escobar} holds under its original
assumptions alone. We answer this question negatively in all dimensions $n\geq3$.

Let $\mathbb{B}^n \subset \mathbb{R}^n$ be the unit ball, $\mathbb{S}^{n-1} = \partial \mathbb{B}^n$, $\omega_n = |\mathbb{B}^n|$, and $\delta = g_{\mathbb{R}^n}$. For a smooth function $\Phi$ on $\overline{\mathbb{B}^n}$, define
 \begin{equation}\label{eq:linearized-data}
  \mathcal{A}_\Phi:=-(n-2)\mathrm{Hess}\,\Phi-(\Delta\Phi)\delta,
		\qquad
  q_\Phi:=\partial_\nu\Phi-\Phi,
 \end{equation}
and
 \begin{equation}\label{eq:linearized-rayleigh}
  \mathcal{L}_n(\Phi):=\frac1{\omega_n}\left((n-2)\int_{\mathbb{B}^n}\Phi\dd x -(n-1)\int_{\mathbb{S}^{n-1}}x_1^2\Phi \dd S\right).
 \end{equation}
The following criterion is the perturbative core of the construction. 
\begin{theorem}\label{thm:criterion}
 Let $n\ge3$, and let $\Phi$ be a smooth even function on $\overline{\mathbb{B}^n}$ with respect to $x_1$, i.e., 
\[
\Phi(-x_1,x')=\Phi(x_1,x'), \qquad x=(x_1,x')\in \mathbb{R}\times \mathbb{R}^{n-1}.
\] 
Suppose that
 \begin{equation}\label{eq:three-signs}
  \mathcal{A}_\Phi>0\quad\text{on }\ \overline{\mathbb{B}^n},
		\qquad
  q_\Phi>0\quad\text{on } \ \mathbb{S}^{n-1},
		\qquad
  \mathcal{L}_n(\Phi)<0.
\end{equation}
Then there exists $t_0>0$ such that, for every $0<t<t_0$, the conformal metric
\[
 g_t=e^{2t\Phi}\delta
\]
satisfies
\[
 \mathrm{Ric}_{g_t}>0, \qquad \mathrm{II}_{g_t}>g_t|_{\partial\mathbb{B}^n}, \qquad \sigma_1(\mathbb{B}^n,g_t)<1.
\]
Moreover, the set of smooth even functions satisfying \eqref{eq:three-signs} is open in the $C^2$ topology. Thus, each such $\Phi$ yields an infinite-dimensional open family of counterexamples.
\end{theorem}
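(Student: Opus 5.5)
The plan is to expand each of the three quantities to first order in $t$ around the Euclidean ball and to read off the three sign conditions in \eqref{eq:three-signs} as the first-order coefficients. Write $u=t\Phi$ and $g_t=e^{2u}\delta$.

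For the Ricci curvature I would use the standard conformal transformation law
\[
\mathrm{Ric}_{g_t}=-(n-2)\bigl(\mathrm{Hess}\,u-du\otimes du\bigr)-\bigl(\Delta u+(n-2)|du|^2\bigr)\delta .
\]
This gives $\mathrm{Ric}_{g_t}=t\,\mathcal{A}_\Phi+t^2 B_\Phi$, where $B_\Phi=(n-2)\,d\Phi\otimes d\Phi-(n-2)|d\Phi|^2\delta$ is bounded on $\overline{\mathbb{B}^n}$. Since $\mathcal{A}_\Phi>0$ on the compact set $\overline{\mathbb{B}^n}$, there is some $c>0$ with $\mathcal{A}_\Phi\ge c\,\delta$. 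Hence $\mathrm{Ric}_{g_t}\ge (tc-Ct^2)\delta>0$ for all small $t$.

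For the boundary, the outward unit normal is $\nu_t=e^{-u}\nu$, and the conformal law gives $\mathrm{II}_{g_t}=e^{u}\bigl(\mathrm{II}_\delta+(\partial_\nu u)\,\delta\bigr)$ on $T\Sigma$. On the unit sphere $\mathrm{II}_\delta=\delta|_\Sigma$, so
\[
\mathrm{II}_{g_t}=e^{t\Phi}\bigl(1+t\,\partial_\nu\Phi\bigr)\delta|_\Sigma .
\]
We need this to exceed $g_t|_\Sigma=e^{2t\Phi}\delta|_\Sigma$, which is equivalent to $e^{-t\Phi}(1+t\partial_\nu\Phi)>1$. The left side equals $1+t\,q_\Phi+O(t^2)$ with a uniform remainder, because $\Phi$ is smooth on a compact set. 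Since $\min_\Sigma q_\Phi>0$, this inequality holds for small $t$.

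For the eigenvalue I would use the test function $x_1$ in \eqref{eq:rayleigh}. The volume and area elements are $dV_{g_t}=e^{nt\Phi}dx$ and $dS_{g_t}=e^{(n-1)t\Phi}dS$, and $|\nabla_{g_t}x_1|^2_{g_t}=e^{-2t\Phi}$.
\begin{itemize}
\item Admissibility: because $\Phi$ is even in $x_1$, the integral $\int_\Sigma x_1 e^{(n-1)t\Phi}dS$ vanishes, so $x_1$ is an admissible competitor. This is the only place evenness is used.
\item Numerator: it equals $\int_{\mathbb{B}^n}e^{(n-2)t\Phi}dx=\omega_n+(n-2)t\int_{\mathbb{B}^n}\Phi\,dx+O(t^2)$.
\item Denominator: it equals $\int_\Sigma x_1^2e^{(n-1)t\Phi}dS=\omega_n+(n-1)t\int_\Sigma x_1^2\Phi\,dS+O(t^2)$, using $\int_{\mathbb{S}^{n-1}}x_1^2\,dS=|\mathbb{S}^{n-1}|/n=\omega_n$.
\end{itemize}
The quotient is therefore $1+t\,\mathcal{L}_n(\Phi)+O(t^2)$. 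Since $\mathcal{L}_n(\Phi)<0$, this gives $\sigma_1(\mathbb{B}^n,g_t)<1$ for small $t$. Taking $t_0$ to be the minimum of the three thresholds proves the main claim.

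For openness, $\mathcal{A}_\Phi$ depends continuously on $\Phi$ in $C^2$, $q_\Phi$ in $C^1$, and $\mathcal{L}_n(\Phi)$ in $C^0$. Strict inequalities on the compact sets $\overline{\mathbb{B}^n}$ and $\mathbb{S}^{n-1}$ have positive margins, so they persist under small $C^2$ perturbations within the (infinite-dimensional) space of even functions. Each admissible $\Phi$ thus yields an open family of counterexamples.

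I do not expect a genuine obstacle. The only care needed is to keep the $O(t^2)$ remainders uniform, which follows from smoothness and compactness, and to state the conformal formulas for $\mathrm{II}$ with the sign conventions of the paper. The real difficulty, producing a $\Phi$ that satisfies all three conditions simultaneously, lies outside this theorem.
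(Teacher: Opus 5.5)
Your proposal is correct and follows the paper's proof essentially step by step. You use the same conformal Ricci expansion $t\mathcal{A}_\Phi+O(t^2)$, the same principal-curvature formula $e^{-t\Phi}(1+t\partial_\nu\Phi)=1+t\,q_\Phi+O(t^2)$, the same test function $x_1$ (admissible by evenness) with Rayleigh quotient $1+t\,\mathcal{L}_n(\Phi)+O(t^2)$, and the same openness argument from persistence of strict inequalities on compact sets.
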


Our second theorem supplies a polynomial satisfying all three strict inequalities in every dimension.

\begin{theorem}\label{thm:polynomial}
 For every integer $n\geq3$, there exists an even polynomial $\Phi_n\colon\mathbb{R}^n\to\mathbb{R}$ such that
 \[
  \mathcal{A}_{\Phi_n}>0\quad\text{on } \ \overline{\mathbb{B}^n}, \qquad q_{\Phi_n}>0\quad\text{on } \ \mathbb{S}^{n-1}, \qquad \mathcal{L}_n(\Phi_n)<0.
 \]
Consequently, if $g_t=e^{2t\Phi_n}\delta$, then the Rayleigh quotient of the test function $x_1$ is less than $1$ for every sufficiently small $t>0$. In particular,
\[
\mathrm{Ric}_{g_t}>0, \qquad \mathrm{II}_{g_t}>g_t|_{\partial\mathbb{B}^n},\qquad \sigma_1(\mathbb{B}^n,g_t)<1.
 \]
Therefore, Conjecture~\ref{conj:Escobar} fails in every dimension $n\geq3$.
\end{theorem}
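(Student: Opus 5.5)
By Theorem~\ref{thm:criterion}, it is enough to produce, for each $n\ge3$, an even polynomial $\Phi_n$ satisfying the three strict inequalities \eqref{eq:three-signs}. The remaining conclusions then follow directly from Theorem~\ref{thm:criterion}, with the test function $x_1$ used in its proof. The plan is to use the $O(n-1)$ symmetry in $x'$ to reduce everything to a few explicit polynomial inequalities in two variables, and then choose coefficients.

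\textbf{Step 1: quadratics do not suffice.} I would first try $\Phi=c-\tfrac12\sum_i\lambda_i x_i^2$. The standard moments are
\[
\int_{\mathbb{B}^n}x_i^2\dd x=\frac{\omega_n}{n+2},\qquad
\int_{\mathbb{S}^{n-1}}x_1^2\dd S=\omega_n,\qquad
\int_{\mathbb{S}^{n-1}}x_1^4\dd S=\frac{3\omega_n}{n+2},\qquad
\int_{\mathbb{S}^{n-1}}x_1^2x_j^2\dd S=\frac{\omega_n}{n+2}.
\]
With these, write $\Lambda=\sum_i\lambda_i$. The three conditions become
\[
(n-2)\lambda_i+\Lambda>0,\qquad c<-\tfrac12\max_i\lambda_i,\qquad c>\frac{\Lambda+2(n-1)\lambda_1}{2(n+2)}.
\]
Take the symmetric choice $\lambda_1=-a$ and $\lambda_j=b$ for $j\ge2$. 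Then the first condition forces $b>a$, while the last two together force $(2n+1)b<(2n-1)a$. These are incompatible. So some quartic (or higher) term is genuinely needed, and the quadratic computation shows which direction the correction must push.

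\textbf{Step 2: a higher-degree ansatz.} I would set $s=x_1^2$ and $r=|x'|^2$, and take
\[
\Phi=c+\alpha s+\beta r+\gamma s^2+\mu sr+\eta r^2
\]
(adding sextic terms if necessary), with all coefficients allowed to depend on $n$. Invariance under $O(n-1)$ acting on $x'$ reduces the three conditions as follows.

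\emph{The operator $\mathcal{A}_\Phi$.} At a point $(x_1,x')$, the Hessian splits into three pieces: the tangential $x'$-directions, with eigenvalue $2\partial_r p$; and a $2\times2$ block spanned by $e_1$ and $x'/|x'|$. Hence $\mathcal{A}_\Phi>0$ becomes positivity of one scalar polynomial together with a $2\times2$ symmetric matrix. The matrix condition is checked via its trace and determinant, on the region $\{s,r\ge0,\ s+r\le1\}$.

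\emph{The boundary term $q_\Phi$.} On the sphere, $\partial_\nu\Phi=x\cdot\nabla\Phi=2s\partial_s p+2r\partial_r p$ and $r=1-s$. So $q_\Phi>0$ is a one-variable polynomial inequality on $s\in[0,1]$.

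\emph{The functional $\mathcal{L}_n$.} This is linear in the coefficients. It is computed from ball and sphere moments of the form $\int x_1^{2k}|x'|^{2l}$, all expressible through Beta functions.

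\textbf{Step 3: choosing coefficients.} The quartic terms should lower $\mathcal{L}_n$ without spoiling the other two conditions. A natural device is to add multiples of $|x|^4$, which is constant on the sphere, together with $x_1^2|x|^2$-type terms. The latter alter the Hessian at first order near the boundary but enter $\mathcal{L}_n$ with $n$-dependent weights. I would choose $a\approx b$ near the borderline of Step 1, and then correct with the quartic terms. The coefficients would be solved for first for $n=3,4$ (with computer algebra), to guess a closed form in $n$, possibly with a $1/n$ scaling.

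\textbf{Main obstacle.} The hardest part is certifying positivity of the $2\times2$ block of $\mathcal{A}_\Phi$ on the whole triangle, uniformly in every $n\ge3$. I would handle it as follows:
\begin{itemize}
\item divide the relevant polynomials by powers of $n$ and expand in $1/n$;
\item show that the leading-order polynomials are strictly positive on the compact region, which also handles large $n$;
\item check the finitely many remaining small $n$ directly, by exact rational arithmetic or a sum-of-squares or interval-arithmetic certificate (the Mathematica part).
\end{itemize}
Since all the inequalities are strict, the openness statement in Theorem~\ref{thm:criterion} allows rational coefficients to be used throughout.
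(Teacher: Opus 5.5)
Your framework is the paper's. Theorem~\ref{thm:criterion} reduces everything to exhibiting $\Phi_n$. The ansatz $\Phi=p(x_1^2,|x'|^2)$, the splitting of $\mathcal A_\Phi$ into a $2\times2$ block plus a scalar, the one-variable boundary inequality and the Beta moments for $\mathcal L_n$ are exactly Section~\ref{sec:quadratic}. Your Step~1 is Remark~\ref{rmk:linear}, though as written it only excludes $a\ge0$; the case $a<b<0$ needs the other Ricci inequality $(2n-3)b>a$, or the branch $c<a/2$ of the maximum.

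However, Theorem~\ref{thm:polynomial} is an existence statement whose whole content is the construction. Your proposal never produces a coefficient vector, nor shows that the feasible set of your ansatz is nonempty for even one $n$. The paper's proof is Theorem~\ref{thm:criterion} combined with two explicit constructions:
\begin{itemize}
\item Proposition~\ref{prop:quadratic}: an explicit $n$-dependent quartic $P_n$, checked by hand for $n\ge100$;
\item Proposition~\ref{prop:finite}: explicit polynomials of degree $8$ in $x$, built from $F_3,F_4,F_5$, the weights $\alpha_i$ and the correction $R$, verified exactly with margins $1/1300000$ and $1/25000$ for $3\le n\le100$.
\end{itemize}
This already goes beyond your tentative sextic ansatz.

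More importantly, the certification scheme you propose for uniformity in $n$ cannot work as stated. Take the equator point $x=e_2$, i.e.\ $(s,r)=(0,1)$, with everything evaluated there. One has $\mathcal A_\Phi(e_1,e_1)=-2(n-1)(p_s+p_r)-4p_{rr}$ and $q_\Phi=2p_r-p$. The exact moments give $\mathcal L_n(s^ir^j)\to-1$ for $i=0$, $\to-2$ for $i=1$, and $\to0$ for $i\ge2$, so $-\mathcal L_n(\Phi)\to p+2p_s$. Hence, for any fixed $p$ (or for the leading-order part of any $n$-dependent family with bounded coefficients),
\[
\frac{\mathcal A_\Phi(e_1,e_1)}{n}+q_\Phi-\mathcal L_n(\Phi)\longrightarrow-2(p_s+p_r)+(2p_r-p)+(p+2p_s)=0 .
\]
So the three conditions are incompatible at leading order in $1/n$. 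There are no ``leading-order polynomials strictly positive on the compact region.'' Any admissible family must have $p_s+p_r=O(1/n)$ at $(0,1)$, with all three margins tending to zero there.

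The paper's $P_n$ is tuned exactly to this degeneracy: $p_s+p_r=-\tfrac3{5n}-\tfrac4{5n^2}$ at $(0,1)$, $U_b=\tfrac{2(n-4)}{5n^2}$, and the window for the constant term has width $M_n-S_n=\tfrac{9n^2-180n-16}{20n^2(n+2)(n+4)}=O(n^{-2})$. What your plan is missing is an explicitly $n$-dependent choice with $1/n$ and $1/n^2$ corrections, together with an exact sign verification. The paper does this through the vertex conditions \eqref{eq:quadratic-Ricci-sufficient} in barycentric coordinates; a leading-order asymptotic argument cannot replace it.
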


The proof separates into two explicit constructions. A quadratic family treats all sufficiently large dimensions, while a quartic family covers $3\leq n\leq100$. 

This paper is organized as follows. In Section~\ref{sec:criterion}, we prove Theorem~\ref{thm:criterion}. Section~\ref{sec:quadratic} explores the quadratic polynomial, identifies a sufficient feasible region, and constructs counterexamples for $n\geq100$. The explicit polynomials for $3\leq n\leq100$ are presented in Section~\ref{sec:finite}; their exact verification is contained in the accompanying Mathematica file.

\section{Proof of the perturbative criterion}\label{sec:criterion}

In this section, we prove the perturbative criterion (Theorem \ref{thm:criterion}). The perturbative argument combines the standard conformal change formulas with the classical use of coordinate functions as test functions in the Steklov variational principle.
\begin{proof}[Proof of Theorem \ref{thm:criterion}]
For a conformal change $\widetilde g=e^{2f}\delta$ in dimension $n$,
\[
 \mathrm{Ric}_{\widetilde g} =-(n-2)\mathrm{Hess}\, f-(\Delta f)\delta +(n-2)\left(\dd f\otimes\dd f-|\nabla f|^2\delta\right).
\]
Putting $f=t\Phi$ gives
\begin{equation}\label{eq:ricci-expansion}
 \mathrm{Ric}_{g_t} =t\mathcal{A}_\Phi +(n-2)t^2\left(\dd\Phi\otimes\dd\Phi-|\nabla\Phi|^2\delta\right).
\end{equation}
Since $\mathcal{A}_\Phi$ is uniformly positive on $\overline{\mathbb{B}^n}$ and $\Phi$ is smooth, the quadratic term is bounded there. Thus \eqref{eq:ricci-expansion} is positive as a tensor for all $t>0$ small enough.

With the convention that the Euclidean unit sphere has principal curvatures $+1$, the standard conformal change formula for the boundary principal curvatures (see, e.g., \cite{Escobar1992}) gives, for each $i=1,\dots,n-1$,
\begin{equation}\label{eq:kappa-expansion}
\kappa_i(t)=e^{-t\Phi}\left(1+t\partial_\nu\Phi\right) =1+tq_\Phi+O(t^2),
\end{equation}  
uniformly on $\mathbb{S}^{n-1}$, where $\nu$ is the outward unit normal vector. Therefore $q_\Phi>0$ implies $\kappa_i(t)>1$ for small positive $t$.
 
The parity assumption makes $x_1$ an admissible test function in the Steklov variational principle, since $x_1\in H^1(\mathbb{B}^n)$ and,
\[
 \int_{\mathbb{S}^{n-1}}x_1 \dd S_{g_t} =\int_{\mathbb{S}^{n-1}}x_1 e^{(n-1)t\Phi} \dd S=0.
\]
In the conformal metric $g_t=e^{2t\Phi}\delta$, the volume element is $e^{nt\Phi}\dd x$, the boundary area element is $\dd S_{g_t}=e^{(n-1)t\Phi}\dd S$, and $|\nabla_{g_t}x_1|^2_{g_t}=e^{-2t\Phi}|\nabla x_1|^2=e^{-2t\Phi}$. Hence the Rayleigh quotient of the test function $x_1$ is
\begin{equation}\label{eq:R-general}
R(t)=\frac{\displaystyle\int_{\mathbb{B}^n}e^{(n-2)t\Phi} \dd x}{\displaystyle\int_{\mathbb{S}^{n-1}}x_1^2e^{(n-1)t\Phi}\dd S}.
 \end{equation}
At $t=0$,
\[
\int_{\mathbb{B}^n}1 \dd x=\omega_n, \qquad \int_{\mathbb{S}^{n-1}}x_1^2 \dd S=\frac{1}{n}|\mathbb{S}^{n-1}|=\omega_n,
\]
so $R(0)=1$. Differentiating under the integral sign at $t=0$, and using $\int_{\mathbb{B}^n}1\dd x=\int_{\mathbb{S}^{n-1}}x_1^2 \dd S=\omega_n$, we obtain
\[
 R'(0) = \frac{1}{\omega_n}\left((n-2)\int_{\mathbb{B}^n}\Phi \dd x -(n-1)\int_{\mathbb{S}^{n-1}}x_1^2\Phi \dd S \right) = \mathcal{L}_n(\Phi)<0.
\]
Thus $R(t)<1$ for sufficiently small $t>0$, and the variational principle yields $\sigma_1(\mathbb{B}^n,g_t)\le R(t)<1$.

Since the inequalities in \eqref{eq:three-signs} are strict and the maps above are continuous in $C^2$, they persist under small $C^2$ perturbations of $\Phi$. Therefore, the admissible set is open.  
\end{proof}

\section{\texorpdfstring{Quadratic counterexamples in high dimension}{Quadratic counterexamples in high dimension}}\label{sec:quadratic}

We now specialize Theorem~\ref{thm:criterion} to quadratic $O(n-1)$-invariant polynomials
\[
\Phi(x)=P(a,b),\qquad
a=x_1^2,\qquad b=x_2^2+\cdots+x_n^2,
\]
where $P(a,b)$ is a quadratic polynomial in $a$ and $b$. Thus, $(a,b)$ lies in the triangle  
\[
\Delta_2:=\{(a,b):a\geq0,\ b\geq0,\ a+b\leq1\}.
\]
 
Direct calculation shows that linear polynomials are incompatible with the three sign conditions in \eqref{eq:three-signs}, see Remark~\ref{rmk:linear}. So we start from a general quadratic polynomial and derive coefficient conditions under which all three inequalities hold. These conditions are satisfied in sufficiently large dimensions and give the desired family of counterexamples.

We consider quadratic $O(n-1)$-invariant polynomials
\begin{equation}\label{eq:quadratic-polynomial}
P(a,b)=c_{0,0}+c_{1,0}a+c_{0,1}b+c_{2,0}a^2+c_{1,1}ab+c_{0,2}b^2, \qquad \Phi(x)=P(a,b).
\end{equation}
The parity assumption in Theorem~\ref{thm:criterion} is automatic. Put $\rho=1-a-b$, so that $(\rho,a,b)$ are barycentric coordinates on $\Delta_2$. By $O(n-1)$ invariance, it is enough to evaluate $\mathcal{A}_\Phi$ at $(\sqrt{a},\sqrt{b},0,\ldots,0)$. At such a point,
\[
\mathrm{Hess}\, \Phi=
\begin{pmatrix}
2P_a+4aP_{aa}&4\sqrt{ab}\,P_{ab}&0\\
4\sqrt{ab}\,P_{ab}&2P_b+4bP_{bb}&0\\
0&0&2P_b I_{n-2}
\end{pmatrix}.
\]
Substituting the expression for $\mathrm{Hess}\,\Phi$ into \eqref{eq:linearized-data} gives
\[
\mathcal A_\Phi=
\begin{pmatrix}U&W\\W&V\end{pmatrix}\oplus T I_{n-2},
\qquad W=-4(n-2)c_{1,1}\sqrt{ab}, 
\]
where $U,V,T$ are affine on $\Delta_2$:
\[
U=U_\rho\rho+U_a a+U_b b, \qquad V=V_\rho\rho+V_a a+V_b b, \qquad T=T_\rho\rho+T_a a+T_b b.
\]
Their vertex values are
\begin{align*}
U_\rho&=-2(n-1)(c_{1,0}+c_{0,1}),\\
U_a&=-2(n-1)(c_{1,0}+c_{0,1}+6c_{2,0}+c_{1,1}),\\
U_b&=-2(n-1)(c_{1,0}+c_{0,1}+c_{1,1})-4(n+1)c_{0,2},\\
V_\rho&=-2c_{1,0}-2(2n-3)c_{0,1},\\
V_a&=V_\rho-12c_{2,0}-2(2n-3)c_{1,1},\\
V_b&=V_\rho-2c_{1,1}-4(4n-5)c_{0,2},\\
T_\rho&=V_\rho,\qquad T_a=V_a, \qquad 
T_b=V_\rho-2c_{1,1}-4(2n-1)c_{0,2}.
\end{align*}
Define
\begin{equation}\label{eq:Dab-def}
D_{ab}=U_aV_b+U_bV_a-16(n-2)^2c_{1,1}^2.
\end{equation}
The determinant of the nontrivial block has the expression
\begin{align*}
UV-W^2={}&U_\rho V_\rho\rho^2+U_aV_a a^2+U_bV_b b^2+D_{ab}ab\\
 &+(U_aV_\rho+U_\rho V_a)a\rho +(U_bV_\rho+U_\rho V_b)b\rho.
\end{align*}
Consequently, the following finite system is sufficient for $\mathcal{A}_\Phi>0$ on $\overline{\mathbb{B}^n}$:
\begin{equation}\label{eq:quadratic-Ricci-sufficient}
U_\rho,U_a,U_b,V_\rho,V_a,V_b,T_b,D_{ab}>0.
\end{equation}

On $\mathbb{S}^{n-1}$, where $a+b=1$, one has $\partial_\nu\Phi=2aP_a+2bP_b$. Hence
\begin{align}
q_\Phi
 &=-c_{0,0}+c_{1,0}a+c_{0,1}b+3c_{2,0}a^2+3c_{1,1}ab+3c_{0,2}b^2\notag\\
 &=-c_{0,0}(a+b)^2+c_{1,0}a(a+b)+c_{0,1}b(a+b)
   +3c_{2,0}a^2+3c_{1,1}ab+3c_{0,2}b^2\notag\\
 &=Q_a a^2+2Q_mab+Q_b b^2\label{eq:q-expansion},
\end{align}
where
\begin{equation}\label{eq:Q-definitions}
\begin{aligned}
 Q_a&=-c_{0,0}+c_{1,0}+3c_{2,0},\\
 Q_m&=-c_{0,0}+\frac{c_{1,0}+c_{0,1}+3c_{1,1}}2,\\
 Q_b&=-c_{0,0}+c_{0,1}+3c_{0,2}.
\end{aligned}
\end{equation}
The inequalities $Q_a,Q_m,Q_b>0$ imply $q_\Phi>0$.

For $i,j\geq0$, with $(z)_k=z(z+1)\cdots(z+k-1)$ denoting the
rising factorial, the beta integral yields
\begin{align*}
 &\frac1{\omega_n}\int_{\mathbb{B}^n}a^ib^j \dd x
 =\frac{n}{n+2i+2j}
\frac{\left(\frac12\right)_i\left(\frac{n-1}{2}\right)_j}
 {\left(\frac n2\right)_{i+j}},
 \\
 &\frac1{\omega_n}\int_{\mathbb{S}^{n-1}}a^ib^jx_1^2 \dd S
 =n\frac{\left(\frac12\right)_{i+1}\left(\frac{n-1}{2}\right)_j}
         {\left(\frac n2\right)_{i+j+1}}.
\end{align*}
Therefore,
\begin{equation*}\label{eq:ell-moment}
  \mathcal{L}_n(a^ib^j)={}\frac{n(n-2)}{n+2i+2j} \frac{(\frac12)_i(\frac{n-1}{2})_j}{(\frac n2)_{i+j}}-n(n-1)\frac{(\frac12)_{i+1}(\frac{n-1}{2})_j}{(\frac n2)_{i+j+1}}.
\end{equation*}
In particular,
\begin{align*}
\mathcal{L}_n(1)&=-1,&
\mathcal{L}_n(a)&=-\frac{2n-1}{n+2},&
\mathcal{L}_n(b)&=-\frac{n-1}{n+2},&\\
\mathcal{L}_n(a^2)&=-\frac{3(4n-3)}{(n+2)(n+4)},&
\mathcal{L}_n(ab)&=-\frac{(n-1)(2n-1)}{(n+2)(n+4)},&
\mathcal{L}_n(b^2)&=-\frac{n^2-1}{(n+2)(n+4)}.&
\end{align*}
The moment identities above yield
\begin{align*}
-\mathcal{L}_n(\Phi)=
&c_{0,0} +\frac{2n-1}{n+2}c_{1,0}+\frac{n-1}{n+2}c_{0,1} +\frac{3(4n-3)}{(n+2)(n+4)}c_{2,0}\\
&+\frac{(n-1)(2n-1)}{(n+2)(n+4)}c_{1,1} +\frac{n^2-1}{(n+2)(n+4)}c_{0,2}.
\end{align*}

We are now in a position to describe the feasible region explicitly. Define
\begin{align}
 S_n={}&-\frac{2n-1}{n+2}c_{1,0}-\frac{n-1}{n+2}c_{0,1} -\frac{3(4n-3)}{(n+2)(n+4)}c_{2,0}\notag\\
 &-\frac{(n-1)(2n-1)}{(n+2)(n+4)}c_{1,1} -\frac{n^2-1}{(n+2)(n+4)}c_{0,2},
 \label{eq:S-lower-bound}\\
 M_n={}&\min\left\{c_{1,0}+3c_{2,0}, \, \frac{c_{1,0}+c_{0,1}+3c_{1,1}}2,\, c_{0,1}+3c_{0,2}\right\}.
 \label{eq:M-upper-bound}
\end{align}
 Then 
\begin{equation}\label{eq:quadratic-feasible-region}
\begin{aligned}
\mathfrak{F}_n=\Bigl\{
\mathbf c=& (c_{0,0}, c_{1,0}, c_{0,1}, c_{2,0}, c_{1,1}, c_{0,2}) \in\mathbb R^6 :\\
{}
&\;U_\rho,U_a,U_b,V_\rho,V_a,V_b,T_b,D_{ab}>0, 
\; S_n<c_{0,0}<M_n \Bigr\}.
\end{aligned}
\end{equation}
By the definitions of $S_n$ and $M_n$, the inequality $c_{0,0}>S_n$ is equivalent to $\mathcal{L}_n(\Phi)<0$, while $c_{0,0}<M_n$ is equivalent to $Q_a,Q_m,Q_b>0$. Thus, every coefficient vector in $\mathfrak{F}_n$ satisfies all three hypotheses of Theorem~\ref{thm:criterion}. 

\begin{remark}\label{rmk:linear}
If $P=c_0+c_1a+c_2 b$ is linear, then the Ricci condition implies \[
c_1+c_2<0, \ \ \ c_1+(2n-3)c_2<0,
\]  
while the boundary condition implies $c_0<\min\{c_1,c_2\}$. Since 
\[
\mathcal{L}_n(a)=-\frac{2n-1}{n+2}, \ \ \mathcal{L}_n(b)=-\frac{n-1}{n+2},
\]
we have 
\[
\mathcal{L}_n(\Phi)=-c_0+c_1\mathcal{L}_n(a)+c_2\mathcal{L}_n(b).
\]
If $c_1\leq c_2$, then $c_1<0$, $c_2<-c_1$, and
\[
\mathcal{L}_n(\Phi)> -c_1+c_1\mathcal{L}_n(a)-c_1\mathcal{L}_n(b)>0.
\]
If $c_2\leq c_1$, then $c_2<0$, $c_1<-c_2$, and
\[
\mathcal{L}_n(\Phi)>-c_2 -c_2\mathcal{L}_n(a)+c_2\mathcal{L}_n(b)>0.
\]
Hence, the function $\Phi(x)=P(a,b)$ fails to satisfy the three sign conditions in \eqref{eq:three-signs}.
\end{remark} 

We now construct an explicit quadratic polynomial satisfying the three conditions of Theorem~\ref{thm:criterion} for every $n\geq 100$. Since the feasible region $\mathfrak{F}_n$ is not a singleton, the choice of such a quadratic polynomial is not unique. Here, we present one relatively simple explicit solution. The detailed algorithmic procedure used to select these specific coefficients from $\mathfrak{F}_n$ is documented in the accompanying Mathematica file.
\begin{proposition}\label{prop:quadratic}
For every integer $n\geq100$, define 
\begin{equation}\label{eq:quadratic-family}
\begin{aligned}
P_n(a,b)={}&-\frac{22n^4+156n^3+329n^2+12n-16}{40n^2(n+2)(n+4)}+a-\left(1+\frac3{5n}\right)b\\
&+\left(\frac1n-\frac1{30}\right)a^2-\left(\frac3{10}+\frac4{5n^2}\right)ab+\frac3{20}b^2,
\end{aligned} 
\end{equation}
and set $\Phi_n(x)=P_n(x_1^2,x_2^2+\cdots+x_n^2)$. Then 
\begin{equation}\label{eq:quadratic-three-signs}
\mathcal{A}_{\Phi_n}>0,\qquad q_{\Phi_n}>0,\qquad \mathcal{L}_n(\Phi_n)<0. 
\end{equation}
Consequently, for every sufficiently small $t>0$, the metric $g_t=e^{2t\Phi_n}\delta$ is a counterexample to Conjecture~\ref{conj:Escobar}.
\end{proposition}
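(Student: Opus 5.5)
The plan is to show that the coefficient vector of $P_n$ lies in $\mathfrak{F}_n$ for every $n\ge100$. By the discussion preceding \eqref{eq:quadratic-feasible-region}, this gives \eqref{eq:quadratic-three-signs}, and Theorem~\ref{thm:criterion} then supplies the counterexample. We read off
\[
c_{1,0}=1,\ c_{0,1}=-1-\tfrac{3}{5n},\ c_{2,0}=\tfrac1n-\tfrac1{30},\ c_{1,1}=-\tfrac3{10}-\tfrac4{5n^2},\ c_{0,2}=\tfrac3{20},
\]
with $c_{0,0}$ the given rational function. Each of the finitely many quantities $U_\rho,U_a,U_b,V_\rho,V_a,V_b,T_b,D_{ab}$, $c_{0,0}-S_n$ and $M_n-c_{0,0}$ is then a rational function of $n$. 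Its denominator is a product of powers of $n$, $n+2$, $n+4$ and positive constants, hence positive for $n\ge 1$. So each inequality reduces to positivity of an explicit integer polynomial $N(n)$ for $n\ge100$.

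First, the affine vertex values. For example, $c_{1,0}+c_{0,1}=-\tfrac3{5n}$, so $U_\rho=\tfrac{6(n-1)}{5n}>0$. Next, $c_{1,0}+c_{0,1}+6c_{2,0}+c_{1,1}=-\tfrac3{5n}+\tfrac6n-\tfrac15-\tfrac3{10}-\tfrac4{5n^2}$, which is negative for large $n$, so $U_a>0$. In the same way $V_\rho=-2+2(2n-3)(1+\tfrac3{5n})\sim 4n$, and the corrections in $V_a,V_b,T_b$ are $O(n)$ with smaller coefficients: $V_a\approx 4n+0.4+1.2n$, $V_b\approx 4n-2.4n$, $T_b\approx 4n-1.2n$. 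I will expand each quantity, clear denominators, and check that the leading coefficient of $N(n)$ is positive. Positivity for all $n\ge100$ then follows by writing $n=100+m$ and verifying that every coefficient of $N(100+m)$ in $m$ is positive, or more crudely by bounding the lower-order terms by the leading term.

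Second, the boundary and moment conditions. The constant $c_{0,0}$ was clearly chosen between $S_n$ and $M_n$ (it looks like an average of the two leading asymptotics), so I will compute $S_n$ explicitly from \eqref{eq:S-lower-bound} and each of the three entries of $M_n$ in \eqref{eq:M-upper-bound}. The entries of $M_n$ are $c_{1,0}+3c_{2,0}=\tfrac9{10}+\tfrac3n$, $\tfrac12(c_{1,0}+c_{0,1}+3c_{1,1})\approx -0.45$, and $c_{0,1}+3c_{0,2}\approx-0.55$, so $M_n\approx-0.55$. For $S_n$, $-\tfrac{2n-1}{n+2}\to-2$, $\tfrac{n-1}{n+2}(1+\ldots)\to1$, $c_{2,0}$ contributes $O(1/n)$, $-2c_{1,1}\to0.6$ and $-c_{0,2}\to-0.15$. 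This gives $S_n\to-0.55$ as well, while $c_{0,0}\to-0.55$.

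The main obstacle is therefore exactly this second step. All three quantities share the limit $-11/20$, so the inequalities $S_n<c_{0,0}<M_n$ hold only at order $1/n$ or $1/n^2$. This requires exact rational arithmetic: I must identify which of $Q_b$ and $Q_m$ realizes the minimum and compare $1/n$ coefficients carefully, including the $-\tfrac3{5n}$ and $-\tfrac4{5n^2}$ corrections. I would compute $c_{0,0}-S_n$ and $M_n-c_{0,0}$ as single fractions over $40n^2(n+2)(n+4)$ and check the sign of the numerators for $n\ge100$, which is where the threshold $100$ presumably enters.

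Finally, $D_{ab}=U_aV_b+U_bV_a-16(n-2)^2c_{1,1}^2$ is quadratic in $n$ at leading order. Its leading coefficient comes from $U_a\sim n$, $V_b\sim1.6n$, $U_b\sim n\cdot(\ldots)$, $V_a\sim5.2n$ against $16\cdot0.09\,n^2$, and I must check that it is positive; then the same shift-by-100 argument finishes the proof.
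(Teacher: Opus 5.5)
Your plan is correct and is the paper's proof: you show the coefficient vector lies in $\mathfrak{F}_n$ by computing $U_\rho,\dots,T_b,D_{ab}$ exactly as rational functions of $n$ and checking $S_n<c_{0,0}<M_n$, where in fact $M_n=c_{0,1}+3c_{0,2}$, $c_{0,0}=(S_n+M_n)/2$, and $M_n-S_n=(9n^2-180n-16)/(20n^2(n+2)(n+4))$. Two of your heuristics are off, harmlessly since you compute exactly: $U_b=2(n-4)/(5n^2)$ is $O(1/n)$ rather than of order $n$, and the threshold $n\ge100$ comes from $D_{ab}$ (its numerator $2n^6-189n^5+\cdots$ is still negative at $n=92$), not from the step $S_n<c_{0,0}<M_n$, which already holds for $n\ge21$.
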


\begin{proof}
We verify that the coefficient vector of \eqref{eq:quadratic-family} belongs to $\mathfrak{F}_n$. For the polynomial in \eqref{eq:quadratic-family}, the five nonconstant coefficients are chosen as
\[
c_{1,0}=1,\qquad
c_{0,1}=-1-\frac{3}{5n},\qquad
c_{2,0}=\frac{1}{n}-\frac{1}{30},\qquad
c_{1,1}=-\frac{3}{10}-\frac{4}{5n^2},\qquad
c_{0,2}=\frac{3}{20}.
\]
Substituting these coefficients into the vertex formulas for
$U,V,T$ yields
\[
\begin{aligned}
U_\rho&=\frac{6(n-1)}{5n},
&U_a&=\frac{(n-1)(5n^2-54n+8)}{5n^2},
&U_b&=\frac{2(n-4)}{5n^2},\\
V_\rho&=\frac{2(10n^2-14n-9)}{5n},
&V_a&=\frac{26n^3-35n^2-62n-24}{5n^2},\\
V_b&=\frac{2(4n^3-5n^2-9n+4)}{5n^2},
&T_b&=\frac{2(7n^3-11n^2-9n+4)}{5n^2}.
\end{aligned} 
\]
For $n\ge100$, every numerator appearing above is positive. Indeed, $5n^2-54n+8=n(5n-54)+8>0$ and
$10n^2-14n-9>n(10n-15)>0$. Moreover, 
\[
\begin{aligned}
26n^3-35n^2-62n-24&>n^2(26n-36)>0,\\
4n^3-5n^2-9n+4&>n^2(4n-6)>0,\\
7n^3-11n^2-9n+4&>n^2(7n-12)>0.
\end{aligned}
\]
The remaining Ricci coefficient, defined by
\eqref{eq:Dab-def}, is
\begin{align*}
D_{ab}&=\frac{2}{25n^4}\left(2n^6-189n^5+356n^4+454n^3-1188n^2+1056n-448\right)\\ 
& =\frac{2}{25n^4}\left(n^5(2n-189)+4n^2(89n^2-297)+(454n^3+1056n-448)\right)>0 
\end{align*}
for $n\geq100$. Thus the Ricci positivity conditions in
\eqref{eq:quadratic-feasible-region} are satisfied.

It remains to choose the constant coefficient $c_{0,0}$.
By \eqref{eq:M-upper-bound} and \eqref{eq:S-lower-bound},
\[
M_n=\min\left\{\frac9{10}+\frac3n, \, -\frac9{20}-\frac3{10n}-\frac6{5n^2}, \, -\frac{11}{20}-\frac3{5n}  \right\}=-\frac{11}{20}-\frac3{5n},
\]
and  
\[
S_n=-\frac{11n^4+78n^3+169n^2-84n-16}{20n^2(n+2)(n+4)}.
\]
For $n\geq100$,
\[
M_n-S_n=\frac{9n^2-180n-16}{20n^2(n+2)(n+4)}>0  
\]
and 
\[\frac{S_n+M_n}{2}=c_{0,0}.\]
Hence $S_n<c_{0,0}<M_n$. Thus, $\mathbf{c}=(c_{0,0}, c_{1,0}, c_{0,1}, c_{2,0}, c_{1,1}, c_{0,2})\in \mathfrak{F}_n$, which proves \eqref{eq:quadratic-three-signs}. The conclusion follows from Theorem~\ref{thm:criterion}. 
\end{proof}
	
\section{Polynomial counterexamples for \texorpdfstring{$3\leq n\leq100$}{3 <= n <= 100}}\label{sec:finite}

We now construct the polynomial family for the remaining dimensions $3\leq n\leq 100$. As in the high-dimensional quadratic construction, the three hypotheses of Theorem~\ref{thm:criterion} reduce to explicit polynomial inequalities in the coefficients. The construction starts from the high-dimensional quadratic background and adds a correction built from three polynomials in dimensions $3,4,5$. Since the resulting verification is lengthy, we state only the polynomial and the bounds it satisfies. The complete verification is provided in the accompanying Mathematica file.

Let $H_n$ be the nonconstant part of the quadratic polynomial in \eqref{eq:quadratic-family}, that is,
\[
H_n(a,b)=a-\left(1+\frac3{5n}\right)b
+\left(\frac1n-\frac1{30}\right)a^2
-\left(\frac3{10}+\frac4{5n^2}\right)ab+\frac3{20}b^2.
\]
This polynomial retains the dimension dependence of the high-dimensional construction and will serve as the common quadratic background. The dimensions $3$, $4$, and $5$ require higher-degree corrections. A symbolic search in these dimensions gives the following three zero-constant polynomials:
\[
\begin{aligned}
F_3(a,b)&=\frac1{9058}\bigl(9058a-9061b+1578a^2-10000ab+2496b^2\\
&\qquad\qquad +568a^3-5631a^2b+6045ab^2-659b^3\\
&\qquad\qquad -66a^4-2365a^3b+4312a^2b^2-1648ab^3+87b^4\bigr),\\
F_4(a,b)&=\frac1{171149}\bigl(171149a-172917b+25410a^2-131147ab+40682b^2\\
&\qquad\qquad-1769a^3-50851a^2b+42708ab^2-6519b^3\bigr),\\
F_5(a,b)&=\frac1{396}\bigl(396a-398b+48a^2-268ab+91b^2
-96a^2b+74ab^2-13b^3\bigr).
\end{aligned}
\]
Set $\tau_n=(n-1)^{-1}$, and define the interpolation weights
\[
\begin{aligned}
\alpha_3(n)&=96\tau_n^2\left(\tau_n-\frac13\right)
\left(\tau_n-\frac14\right),\\
\alpha_4(n)&=-648\tau_n^2\left(\tau_n-\frac12\right)
\left(\tau_n-\frac14\right),\\
\alpha_5(n)&=768\tau_n^2\left(\tau_n-\frac12\right)
\left(\tau_n-\frac13\right).
\end{aligned}
\]
At $n=3,4,5$, the corresponding weight $\alpha_i(n)$ equals $1$ and the other two weights vanish. Define the interpolated background by
\[
\begin{aligned}
I_n(a,b)={}&H_n(a,b)
+\alpha_3(n)\bigl(F_3(a,b)-H_3(a,b)\bigr)+\alpha_4(n)\bigl(F_4(a,b)-H_4(a,b)\bigr)\\
&+\alpha_5(n)\bigl(F_5(a,b)-H_5(a,b)\bigr).
\end{aligned}
\]
The interpolation identities imply that
\[
I_3=F_3,\qquad I_4=F_4,\qquad I_5=F_5.
\]
Exact tests of $I_n$ show that small positivity losses remain in some
intermediate dimensions, both in the positive-definiteness condition for
$\mathcal A_{\Phi_n}$ and in the boundary condition $q_{\Phi_n}>0$. A second symbolic search gives the following fixed quartic correction.
\[
\begin{aligned}
R(a,b)=\frac1{150000}\bigl(
&-24645a^4-65010a^3b-78690a^2b^2-32535ab^3+5790b^4\\
&+52785a^3+113505a^2b+79815ab^2-20658b^3\\
&-36345a^2-63975ab+27340b^2+18840a-18045b\bigr).
\end{aligned}
\]
To eliminate the influence of $R(a,b)$ when $n=3$, a third symbolic search gives the following term.
\[
\eta_n=\left(\frac{n-3}{n-2}\right)^6.
\]
Combining these ingredients, define
\[
\begin{aligned}
\widetilde P_n(a,b)={}I_n(a,b)
+\eta_n R(a,b).
\end{aligned}
\]
Every term on the right has zero constant coefficient, and hence $\widetilde P_n(0,0)=0$. Set
\[
P_n(a,b)=10^{-6}+\widetilde P_n(a,b)+\mathcal L_n(\widetilde P_n).
\]
The constant $10^{-6}$ is chosen only to make
$\mathcal L_n(P_n)<0$ explicit. Indeed, since
$\mathcal L_n(1)=-1$, we obtain
\[
\mathcal L_n(P_n)=-10^{-6}<0.
\]
Finally, set 
\begin{equation}\label{eq:finite-polynomial}
\Phi_n(x)=P_n(x_1^2,x_2^2+\cdots+x_n^2).
\end{equation}
The accompanying Mathematica file verifies all the Ricci, boundary, and Rayleigh conditions for the final polynomial $\Phi_n$ for every integer $3\leq n\leq100$.

\begin{proposition}\label{prop:finite}
For every integer $3\leq n\leq100$, the polynomial $\Phi_n$ defined by \eqref{eq:finite-polynomial} satisfies
\begin{equation}\label{eq:uniform-certificate}
\mathcal{A}_{\Phi_n}\geq\frac1{1300000}\,\delta
\quad\text{on } \overline{\mathbb{B}^n},
\qquad
q_{\Phi_n}\geq\frac1{25000}
\quad\text{on } \mathbb{S}^{n-1},
\qquad
-\mathcal L_n(\Phi_n)=10^{-6}.
\end{equation}
In particular, each $\Phi_n$ is a counterexample function in the sense of Theorem~\ref{thm:criterion}.
\end{proposition}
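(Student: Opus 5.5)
The claim is a finite, exact verification for each of the 98 dimensions $3\le n\le100$. For each such $n$ we therefore reduce the three conditions in \eqref{eq:uniform-certificate} to finitely many polynomial inequalities on the triangle $\Delta_2$ and certify them with exact rational arithmetic.

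\textbf{The Rayleigh condition.} This part is immediate. $\mathcal L_n$ is linear and $\mathcal L_n(1)=-1$. Since $\mathcal L_n(\widetilde P_n)$ is a constant, we get
\[
\mathcal L_n(P_n)=-10^{-6}+\mathcal L_n(\widetilde P_n)-\mathcal L_n(\widetilde P_n)=-10^{-6}.
\]
For the record, $\mathcal L_n(\widetilde P_n)$ itself is an exact rational. It is computed from the moment formula for $\mathcal L_n(a^ib^j)$ with rising factorials, and it is needed only to know the constant term of $P_n$ exactly.

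\textbf{The boundary condition.} On $\mathbb S^{n-1}$ we have $\partial_\nu\Phi=2aP_a+2bP_b$. Hence
\[
q_{\Phi_n}=2aP_a+2bP_b-P
\]
restricted to $a+b=1$. Writing $a=s$, $b=1-s$ gives a univariate polynomial $f_n(s)$ of degree at most $4$ with rational coefficients. We must show $f_n(s)-1/25000\ge0$ on $[0,1]$. I would certify this exactly, either by Sturm sequences or real root isolation, or by exhibiting a nonnegative Bernstein-basis expansion after subdividing $[0,1]$ if necessary.

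\textbf{The Ricci condition.} By $O(n-1)$ invariance, $\mathcal A_{\Phi_n}$ at $(\sqrt a,\sqrt b,0,\dots,0)$ decomposes as in Section~\ref{sec:quadratic}. It is a $2\times2$ block $\begin{pmatrix}U&W\\W&V\end{pmatrix}$ plus $T\,I_{n-2}$. Now $U,V,T$ are polynomials in $(a,b)$ of degree at most $3$, and $W=-4(n-2)\sqrt{ab}\,P_{ab}$ up to the appropriate Hessian entries, so $W^2$ is polynomial. Set $\varepsilon=1/1300000$. The condition $\mathcal A\ge\varepsilon\delta$ is equivalent to
\[
T-\varepsilon\ge0,\qquad U-\varepsilon\ge0,\qquad (U-\varepsilon)(V-\varepsilon)-W^2\ge0
\]
on $\Delta_2$. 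These are three polynomial inequalities on a triangle. I would certify each in barycentric coordinates $(\rho,a,b)$ by homogenizing and checking that all Bernstein coefficients are nonnegative. Where that fails near a tight spot, I would subdivide the triangle (de Casteljau) until the coefficients are nonnegative. As a fallback, I would use cylindrical algebraic decomposition (Mathematica's \texttt{Resolve}/\texttt{Reduce} with exact rationals).

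\textbf{Expected obstacle.} The main difficulty is the determinant condition in the intermediate dimensions, where the interpolation $I_n$ was reported to lose positivity and $R$ has to compensate. The margins there are tiny, around $10^{-6}$, so Bernstein certificates may need deep subdivision, particularly near the vertices and the edge $a+b=1$. This has to be done exactly for all 98 values of $n$, which is why the verification is delegated to the accompanying Mathematica file. Once the three certified inequalities hold, Theorem~\ref{thm:criterion} applies directly.
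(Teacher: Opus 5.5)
Your proposal is correct and follows the paper's route. The identity $-\mathcal L_n(\Phi_n)=10^{-6}$ is immediate from linearity and $\mathcal L_n(1)=-1$, and the Ricci and boundary bounds reduce, via the $O(n-1)$-invariant block form of $\mathcal A_{\Phi_n}$ and the restriction of $q_{\Phi_n}$ to $a+b=1$, to finitely many exact polynomial inequalities on $\Delta_2$ for each $3\le n\le100$; the paper likewise delegates that certification to its accompanying Mathematica file.

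One small point: your list of three inequalities omits $V-\varepsilon\ge0$, but this is harmless. Wherever $U-\varepsilon>0$, the determinant condition forces $V-\varepsilon\ge W^2/(U-\varepsilon)\ge0$, and since $U-\varepsilon$ is a nonzero polynomial that set is dense in $\Delta_2$, so continuity finishes. Alternatively, simply add $V-\varepsilon\ge0$ to the certified list.
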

	
\begin{proof}[Proof of Theorem~\ref{thm:polynomial}]
Proposition~\ref{prop:finite} covers $3\leq n\leq100$, and Proposition~\ref{prop:quadratic} covers $n\geq100$. The conclusion follows from Theorem~\ref{thm:criterion}.
\end{proof}

\section*{Acknowledgments}
The authors thank Professor Bobo Hua for his guidance and constant support. The second author is supported by NSFC (Nos.~12101125 and 12371052) and the Fujian Alliance of Mathematics (No.~2024SXLMMS01). The third author is partially supported by the National Key R\&D Program of China (No.~2025YFA1017500).
    
\section*{Declarations}
	
\noindent\textbf{Conflict of Interest}\ \  
The authors declare that they have no conflict of interest.
	
\medskip
\noindent\textbf{Data Availability}\ \  
No datasets were generated or analyzed in this study. The accompanying Mathematica file constructs the high-dimensional quadratic family for $n\geq100$ and provides verification for the finite-dimensional counterexamples for every $3\leq n\leq100$.

\medskip
\noindent\textbf{AI assistance statement}\ \  The authors used OpenAI's ChatGPT (with the GPT-5.6 Sol model) and the Rethlas system to assist with searching for candidate polynomial examples for $3\le n\le 100$, as well as with symbolic checking and manuscript editing. All mathematical statements and proofs were given by the authors, who take full responsibility for the content and accuracy of the article.

\bibliographystyle{plain}
\bibliography{counterexample}

\end{document}